\numberwithin{equation}{section}
\theoremstyle{plain}
\newtheorem{theorem}{Theorem}[section]
\newtheorem{lemma}[theorem]{Lemma}
\newtheorem{proposition}[theorem]{Proposition}
\theoremstyle{definition}
\newtheorem{definition}[theorem]{Definition}
\newtheorem*{hypothesis}{Standing assumption}
\theoremstyle{remark}
\newtheorem{remark}[theorem]{Remark}
\newtheorem{example}[theorem]{Example}
\newcommand{\md}{\mathrm{d}}
\newcommand{\me}{\mathrm{e}}
\newcommand{\R}{\mathbb{R}}
\newcommand{\E}{\mathbb{E}}
\newcommand{\vf}{\varphi}
\newcommand{\minev}{\lambda_{*}}
\newcommand{\svbset}{\mathfrak{S}}
\newcommand{\Sym}{\mathcal{S}}
\newcommand{\Ctr}{\Upsilon}
\begin{document}

\title{\bf On solvability of an indefinite Riccati equation}

\author{Kai~Du\thanks{Financial support by the National Centre of Competence
in Research ``Financial Valuation and Risk Management'' (NCCR FINRISK),
Project D1 (Mathematical Methods in Financial Risk Management) is gratefully
acknowledged. The NCCR FINRISK is a research instrument of the Swiss National
Science Foundation.}\medskip\\ 
Department of Mathematics, ETH Zurich\\
{\tt kai.du@math.ethz.ch}}

\maketitle

%\subjclass[2010]{34A12, 49N10, 93E20}

%\

\begin{abstract}
This note concerns a class of matrix Riccati equations associated with stochastic linear-quadratic optimal control problems with indefinite state and control weighting costs.
A novel sufficient condition of solvability of such equations is derived,
based on a monotonicity property of a newly defined set. 
Such a set is used to describe a family of solvable equations.
\bigskip

\noindent {\bf Key words.} indefinite Riccati equation, LQ problem, solvable set, solvability condition, quasi-linearization
\bigskip

\noindent {\bf MSC 2010 Subject Classiﬁcation.}
34A12, 49N10, 93E20
\end{abstract}

\section{Introduction}

In this paper, we study the problem of solvability of the following matrix Riccati differential equation over a running time interval $[0,T]$:
\begin{subequations}
\begin{equation}\label{1-001}
\begin{aligned}
& \dot{P} + A^\top P + P A + C^\top P C + Q \\
& \quad  = (PB + C^\top P D) (R + D^\top P D)^{-1}(PB + C^\top P D)^{\top}
\end{aligned}
\end{equation}
subject to the terminal condition $P(T) = G$, and the constraint that
\begin{equation}\label{1-002}
R + D^\top P D > 0
\quad \text{over } [0,T],
\end{equation}
\label{1-000}\end{subequations}
where $P$ is the unknown matrix-valued function, and $A,B,C,D,R,Q$ and $G$ are given data.
The time parameter $t$ is omitted for simplicity in this formulation.
More specifically, the following assumption is made throughout this paper.
\begin{hypothesis}
The data appearing in \eqref{1-000} satisfy that
\begin{align*}
A,B,C \in L^{\infty}(0,T;\R^{d \times d}),& \quad
Q \in L^{\infty}(0,T;\Sym^d),\quad
G \in \Sym^d,\\
D \in C([0,T];\R^{d \times d}),&\quad
R \in C([0,T];\Sym^d),
\end{align*}
where $\Sym^d$ is the set of symmetric $d\times d$ matrices.
\end{hypothesis}
In view of the known uniqueness of solutions, a solution matrix $P$ must take values in $\Sym^d$.
As in \eqref{1-002}, we use inequality signs to express the usual semi-order of $\Sym^d$ throughout this paper.

The solvability of \eqref{1-000} plays a key role when solving the following stochastic linear-quadratic (LQ) optimal control problem:
seeking a control $u(\cdot)$ that minimizes the cost
\begin{subequations}
\begin{equation}\label{1-004a}
J(u) = \E\,\biggl\{\int_0^T \left[ u(t)^\top R(t) u(t) + x(t)^\top Q(t) x(t)\right]\md t + x(T)^\top G x(T)\biggr\}
\end{equation}
subject to the controlled state system
\begin{equation}\label{1-004b}
\md x = (Ax + Bu)\,\md t + (Cx + Du)\,\md w_t,\quad
x(0) = x_0 \in \R^d,
\end{equation}
\label{1-004}\end{subequations}
where $(w_t)_{t\ge 0}$ is a standard one-dimensional Wiener process (cf.~\cite{chen1998stochastic, young1999stochastic}). More specifically, as long as \eqref{1-000} admits a solution $P$ over the whole time interval, the above LQ problem is solvable with the minimal cost $J^* = x_0^\top P(0) x_0$ and an optimal control
\[
u^*(t) = - (R + D^\top P D)^{-1}(PB + C^\top P D)^{\top} x(t).
\]
Due to this known connection, we call $(A,B,C,D)$ the \emph{system data}, and call $(R,Q,G)$ the \emph{cost weights}.

In the existing literature $Q$ and $G$ were usually assumed to be semi-positive.
In the early formulation the constraint~\eqref{1-002} did not involved, since the assumption that $R>0$, inherited from deterministic LQ problems, was taken for granted that evidently implies~\eqref{1-002}.
This case has been fully solved via different methods (cf.~\cite{bismut1976linear, peng1992stochastic, tang2003general}).
The first formulation combining \eqref{1-001} and \eqref{1-002} was introduced by Chen-Li-Zhou~\cite{chen1998stochastic}, under both theoretical and practical considerations, in the study of indefinite LQ problems where $R$ could be indefinite.
The uniqueness of solutions to \eqref{1-000} has been established in great generality, see~\cite[Proposition 6.7.2]{young1999stochastic} for instance.
But the existence part is much more complicated.
It is worth noting that the existence is by no means unconditional (see~\cite{chen1998stochastic} for ill-posed examples); 
the problem is thus to find sufficient conditions that ensure the existence.
It has been solved only for several very special cases so far. 
For instance, the one-dimensional problem with constant coefficients has been completely resolved in~\cite{young1999stochastic}.
A necessary-sufficient condition of solvability was given in~\cite{chen1998stochastic} for the case $C=0$, but rather implicit.
Chen-Zhou~\cite{chen2000stochastic} settled the case that $R=0$ and $D^\top D > 0$ allowing different state and control dimensions.
Apart from \eqref{1-000}, its stochastic version (i.e., indefinite stochastic Riccati equations) is also considered in the existing literature, see, e.g.~\cite{hu2003indefinite,qian2013existence}.
However, the deterministic implications of those results either have been derived in some earlier publications or require certain strong conditions such as an equality constraint on the coefficients.
To our best knowledge, the general existence remains to this date an open problem.

In this paper we derive a novel class of sufficient conditions of solvability of \eqref{1-000} thanks to a new insight into the problem.
In view of \eqref{1-004},
we prefer to characterize the set $\svbset$ of cost weights $(R,Q,G)$ of all solvable \eqref{1-000} for fixed system data.
The solvability of the equation is then implied in a complete characterization of $\svbset$.
The latter seems no less difficult than the former, but stimulates a novel approach to derive a class of sufficient conditions. 
This is due to a basic observation, elaborated in Theorem~\ref{thm:201} below, that $\svbset$ is convex, and especially, monotonic in the sense that if $(R,Q,G) \in \svbset$ and $R\le \bar{R},Q \le \bar{Q},G\le\bar{G}$, then $(\bar{R},\bar{Q},\bar{G})\in\svbset$;
in other words, each indefinite triple $(R,Q,G)$ in $\svbset$ can be regarded as a ``benchmark'' that yields a family of solvable indefinite Riccati equations of form~\eqref{1-000}, and, in a sense, the ``lower'' the better.
As a first attempt of this new idea, we derive a sufficient condition of solvability of \eqref{1-000} stated in the following theorem, where we denote by $\minev(M)$ the minimal eigenvalue of a symmetric matrix $M$.

\begin{theorem}\label{thm1.1}
Let $D$ be invertible, and $G>0$. 
For $\alpha\in(0,1)$, let
\[
\lambda_\alpha(t) = 
\minev \bigl(\bigl[{A}^\top + {A} + C^\top C
- (1-\alpha)^{-1}(B+C^\top D) (D^\top D)^{-1} (B^\top + D^\top C) \bigr](t)\bigr),
\]
and $\vf_\alpha(\cdot)>0$ satisfy the following linear ODE over $[0,T]$:
\begin{equation}\label{1-003}
\dot{\vf}_{\alpha}(t) + \lambda_\alpha(t) \vf_{\alpha}(t) + \alpha \minev(Q(t)) = 0,
\quad {\vf}_{\alpha}(T) = \alpha \minev(G).
\end{equation}
Then \eqref{1-000} admits a unique solution over $[0,T]$ provided $R \ge -\vf_{\alpha}D^\top D$ for some $\alpha$ over the same time interval.
\end{theorem}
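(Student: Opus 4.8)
The plan is to deduce the theorem from the monotonicity of $\svbset$ recorded in Theorem~\ref{thm:201}: I will produce a \emph{benchmark} triple $(\bar R,\bar Q,\bar G)\in\svbset$ with $\bar R\le R$, $\bar Q\le Q$ and $\bar G\le G$, whose solvability is certified by an explicit, spatially constant solution matrix. Fix $\alpha\in(0,1)$ for which $R\ge-\vf_\alpha D^\top D$ on $[0,T]$; abbreviate $M:=A^\top+A+C^\top C$ and $W:=(B+C^\top D)(D^\top D)^{-1}(B+C^\top D)^\top\ge0$ (well defined because $D$ is invertible); and propose
\[
\bar P:=\alpha^{-1}\vf_\alpha I,\quad \bar R:=-\vf_\alpha D^\top D,\quad \bar G:=\minev(G)\,I,\quad \bar Q:=\tfrac{1}{\alpha(1-\alpha)}\vf_\alpha W-\alpha^{-1}\dot{\vf}_\alpha I-\alpha^{-1}\vf_\alpha M.
\]

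The first step is to confirm that $\bar P$ solves \eqref{1-000} for the data $(A,B,C,D,\bar R,\bar Q,\bar G)$. Non-degeneracy is immediate from $\vf_\alpha>0$ and invertibility of $D$: one has $\bar R+D^\top\bar PD=\tfrac{1-\alpha}{\alpha}\vf_\alpha D^\top D>0$ on $[0,T]$, so \eqref{1-002} holds; moreover $\bar PB+C^\top\bar PD=\alpha^{-1}\vf_\alpha(B+C^\top D)$, whence the quadratic term in \eqref{1-001} collapses to $\tfrac{1}{\alpha(1-\alpha)}\vf_\alpha W$. Substituting $\bar P=\alpha^{-1}\vf_\alpha I$ into the left side of \eqref{1-001} then turns the equation into the defining identity for $\bar Q$, and $\bar P(T)=\alpha^{-1}\vf_\alpha(T)I=\minev(G)I=\bar G$ by \eqref{1-003}; by the uniqueness quoted in the Introduction this $\bar P$ is the solution, so $(\bar R,\bar Q,\bar G)\in\svbset$. (Here $G>0$ is used only insofar as it makes $\minev(G)>0$, which is in any case forced by $\vf_\alpha(T)=\alpha\minev(G)>0$.)

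The second step compares the two triples. By hypothesis $\bar R\le R$, and $\bar G=\minev(G)I\le G$ is the spectral bound; the crux is $\bar Q\le Q$. Rearranging the definition,
\[
Q-\bar Q=Q+\alpha^{-1}\dot{\vf}_\alpha I+\alpha^{-1}\vf_\alpha\bigl(M-(1-\alpha)^{-1}W\bigr),
\]
and since $\lambda_\alpha=\minev\bigl(M-(1-\alpha)^{-1}W\bigr)$ we have $M-(1-\alpha)^{-1}W\ge\lambda_\alpha I$; multiplying by $\vf_\alpha>0$ and using $\dot{\vf}_\alpha+\lambda_\alpha\vf_\alpha=-\alpha\minev(Q)$ from \eqref{1-003} gives $Q-\bar Q\ge Q-\minev(Q)I\ge0$. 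The monotonicity part of Theorem~\ref{thm:201} then yields $(R,Q,G)\in\svbset$, i.e.\ \eqref{1-000} admits a unique solution on $[0,T]$.

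I expect the genuine difficulty to lie in \emph{finding} this benchmark rather than in checking it: one must guess that $\bar P$ should be a scalar matrix and that the scalar together with $\bar R$ must be calibrated so that the constraint $\bar R+D^\top\bar PD>0$ pins the coefficient of $W$ in the quadratic term at exactly $(1-\alpha)^{-1}$ --- which is precisely why $\lambda_\alpha$ in the statement carries that factor, and why \eqref{1-003} is exactly the scalar relation forcing the residual $\bar Q$ below $Q$. After the ansatz everything is elementary algebra. If one preferred to avoid Theorem~\ref{thm:201}, the same $\bar P$ is directly a subsolution of \eqref{1-000} for the original data, and a quasi-linearization/comparison argument produces the solution with $P\ge\bar P$, which keeps $R+D^\top PD\ge\bar R+D^\top\bar PD>0$ throughout.
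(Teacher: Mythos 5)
Your proposal is correct, and it certifies the key benchmark differently from the paper. The paper proves Theorem~\ref{thm1.1} by reducing (without loss of generality) to $D=I$ and invoking Proposition~\ref{prp:301} with $R_\alpha=\vf_\alpha I$: there the benchmark is $(-R_\alpha,Q,G)$ (only the control weight is lowered), and its solvability is obtained by the shift $P=K+\alpha^{-1}R_\alpha$, where $K$ solves a classical \emph{definite} Riccati equation with cross term whose solvability comes from Lemma~\ref{lem:201}; the differential inequality \eqref{3-001}, verified from \eqref{1-003} and the eigenvalue bound defining $\lambda_\alpha$, is exactly what makes Lemma~\ref{lem:201} applicable, and then only the $R$-monotonicity of Theorem~\ref{thm:201}(c) is used. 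You instead lower all three weights to $(\bar R,\bar Q,\bar G)=(-\vf_\alpha D^\top D,\bar Q,\minev(G)I)$, chosen so that the scalar matrix $\alpha^{-1}\vf_\alpha I$ is an \emph{explicit exact solution} of the benchmark equation (your algebra for the constraint $\bar R+D^\top\bar PD=\tfrac{1-\alpha}{\alpha}\vf_\alpha D^\top D>0$, the collapsed quadratic term $\tfrac{1}{\alpha(1-\alpha)}\vf_\alpha W$, and the terminal value checks out), and the same eigenvalue bound plus \eqref{1-003} is used to show $\bar Q\le Q$ rather than \eqref{3-001}; then Theorem~\ref{thm:201}(c) is applied in all three arguments. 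What your route buys: no appeal to Lemma~\ref{lem:201} or Proposition~\ref{prp:301}, and no reduction to $D=I$ (which the paper asserts without detail), so the benchmark's solvability is verified by bare algebra. What the paper's route buys: Proposition~\ref{prp:301} is a strictly more flexible criterion (matrix-valued $R_\alpha$ subject only to the inequality \eqref{3-001}, with $Q$ and $G$ untouched), and it yields the lower bound $P\ge\alpha^{-1}R_\alpha$ of Remark~\ref{rmk201} directly; you can recover the analogous bound $P\ge\alpha^{-1}\vf_\alpha I$ from the comparison of solutions in Theorem~\ref{thm:201}(c) if desired. Both arguments ultimately lean on Theorem~\ref{thm:201}(c), and uniqueness is in both cases the cited result, so your final sentence is fine as stated.
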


In the above result we determine a class of ``benchmarks'' via a family of linear ODEs,
which is quite explicit and simple.
Such a condition, that is of course not optimal (see Example~\ref{ex5-04} below), is basically either more general or more practicable than those in the existing literature.
From it one can easily construct various examples of solvable indefinite Riccati equations and associated LQ problems for given system data, more interestingly, including those in which not only $R$ but also $Q$ is indefinite.
Likely some refinement of the analysis will yield a more precise characterization of $\svbset$. 
This is planned as future work and is beyond the scope of this note.

The rest of the paper is organized as follows.
Section~2 is devoted to the statement of main results and the proof of Theorem~\ref{thm1.1}.
In Section~3 we utilize Bellman's quasi-linearization method to prove Theorem~\ref{thm:201} that gives some fundamental properties of $\svbset$.
Finally, Section~4 consists of several concrete examples.

\section{Main results}

The main idea of our approach is to characterize cost weights $(R,Q,G)$ that ensure solvability of \eqref{1-000}  for fixed system data $(A,B,C,D)$.
To this end, we introduce the following definition.

\begin{definition}
The \emph{solvable set} $\svbset$ associated with the coefficients $(A,B,C,D)$ is the set of all triples $(R,Q,G)$ such that the corresponding Riccati equation \eqref{1-000} admits a solution.
\end{definition}

The first main result as follows is several fundamental properties of solvable sets.

\begin{theorem}\label{thm:201}
$\svbset$ is a nonempty convex cone excluding the origin. 
More specifically,
denoting by $P(R,Q,G)$ the solution of \eqref{1-000} associated with $(R,Q,G)\in \svbset$, we have the following properties:

{\rm (a)} Positive homogeneity: 
if $(R,Q,G)\in \svbset$ and $\lambda>0$, then $(\lambda R,\lambda Q,\lambda G) \in \svbset$ and $P(\lambda R,\lambda Q,\lambda G) = \lambda P(R,Q,G)$.

{\rm (b)} Super-additivity: 
if $(R,Q,G)\in \svbset$ and $(\bar{R},\bar{Q},\bar{G})\in \svbset$, then $(R + \bar{R},Q+\bar{Q},G+\bar{G})\in \svbset$, and
\begin{align*}
P(R + \bar{R}, Q + \bar{Q}, G + \bar{G}) \ge P(R,Q,G) + P(\bar{R},\bar{Q},\bar{G}).
\end{align*}

{\rm (c)} Monotonicity: 
if $(R,Q,G)\in \svbset$, and $R\le \bar{R}$, $Q\le\bar{Q}$, $G\le \bar{G}$, then $(\bar{R},\bar{Q},\bar{G})\in \svbset$ and $P(R,Q,G) \le P(\bar{R},\bar{Q},\bar{G})$.
\end{theorem}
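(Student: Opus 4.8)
The plan is to distil from Bellman's quasi-linearization a \emph{subsolution principle} and then to feed it two explicitly constructed subsolutions. Before that, note that part (a) and the cone structure are immediate: if $P$ solves \eqref{1-000} for $(R,Q,G)$, then $\lambda P$ solves it for $(\lambda R,\lambda Q,\lambda G)$, since the factor $\lambda$ cancels in $(PB+C^\top PD)(R+D^\top PD)^{-1}(PB+C^\top PD)^\top$ and \eqref{1-002} is merely scaled by $\lambda>0$, so uniqueness of solutions gives $P(\lambda R,\lambda Q,\lambda G)=\lambda P(R,Q,G)$; granting (b) and (c), $\svbset$ is then a convex cone, it is nonempty because $(I,0,0)\in\svbset$ with solution $P\equiv 0$, and it excludes the origin because a solution associated with $(0,0,0)$ would satisfy $D^\top PD>0$ on $[0,T]$ while $D(T)^\top P(T)D(T)=D(T)^\top G\,D(T)=0$. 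So everything reduces to (b) and (c).

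For $\Theta\in L^\infty(0,T;\R^{d\times d})$, let $P^\Theta=P^\Theta(R,Q,G)$ solve the linear terminal value problem
\begin{equation}\label{eq:lyap}
\dot P^\Theta+(A+B\Theta)^\top P^\Theta+P^\Theta(A+B\Theta)+(C+D\Theta)^\top P^\Theta(C+D\Theta)+Q+\Theta^\top R\Theta=0,\quad P^\Theta(T)=G,
\end{equation}
which is uniquely solvable on $[0,T]$ with $P^\Theta$ Lipschitz. Write $\mathcal R_{(R,Q,G)}[\Pi]$ for the left-hand side minus the right-hand side of \eqref{1-001} evaluated at $\Pi$, so that $(R,Q,G)\in\svbset$ with solution $P$ means $\mathcal R_{(R,Q,G)}[P]=0$ and $R+D^\top PD>0$, and set $\hat\Theta(\Pi)=-(R+D^\top\Pi D)^{-1}(\Pi B+C^\top\Pi D)^\top$ whenever $R+D^\top\Pi D$ is invertible. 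Completing the square gives, for every symmetric $\Pi$ and every $\Theta$,
\begin{equation}\label{eq:cos}
\begin{aligned}
&\dot\Pi+(A+B\Theta)^\top\Pi+\Pi(A+B\Theta)+(C+D\Theta)^\top\Pi(C+D\Theta)+Q+\Theta^\top R\Theta\\
&\qquad=\mathcal R_{(R,Q,G)}[\Pi]+(\Theta-\hat\Theta(\Pi))^\top(R+D^\top\Pi D)(\Theta-\hat\Theta(\Pi)).
\end{aligned}
\end{equation}
The key step is the \emph{subsolution principle}: if $\Pi\in C([0,T];\Sym^d)$ satisfies $\mathcal R_{(R,Q,G)}[\Pi]\ge 0$ on $[0,T]$, $R+D^\top\Pi D>0$ on $[0,T]$, and $\Pi(T)\le G$, then $(R,Q,G)\in\svbset$ with $P(R,Q,G)\ge\Pi$. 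To prove it, run the Newton iteration $\Pi_0=\Pi$, $\Theta_{k+1}=\hat\Theta(\Pi_k)$, $\Pi_{k+1}=P^{\Theta_{k+1}}$. Since \eqref{eq:cos} at $\Pi_0$ with any $\Theta$ has nonnegative right-hand side, the order-preservation of the solution map of \eqref{eq:lyap} gives $P^\Theta\ge\Pi_0$ for \emph{every} $\Theta$; hence $\Pi_k\ge\Pi_0$ for all $k$, so $R+D^\top\Pi_kD\ge R+D^\top\Pi_0D\ge\delta I$ for a fixed $\delta>0$ (continuity and compactness of $[0,T]$) and the feedbacks never degenerate. Then \eqref{eq:cos} at $\Pi_k$ with $\Theta=\Theta_k$ shows $\mathcal R_{(R,Q,G)}[\Pi_k]\le 0$ for $k\ge 1$, and one more application of \eqref{eq:cos} together with comparison gives $\Pi_{k+1}\le\Pi_k$ for $k\ge 1$. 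Thus $\Pi_0\le\Pi_k\le\Pi_1$ for $k\ge 1$: the sequence is monotone and uniformly bounded, hence converges pointwise to some $P\ge\Pi_0$ with $R+D^\top PD\ge\delta I>0$; the uniform bound allows passing to the limit (dominated convergence) in the integrated form of \eqref{eq:lyap}, yielding $P=P^{\hat\Theta(P)}$, i.e.\ $\mathcal R_{(R,Q,G)}[P]=0$ by \eqref{eq:cos}.

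It remains to produce the subsolutions. For (c), take $\Pi=P(R,Q,G)$: then $\bar R+D^\top\Pi D\ge R+D^\top\Pi D>0$, $\Pi(T)=G\le\bar G$, and, using $\mathcal R_{(R,Q,G)}[\Pi]=0$ and the order-reversal of the matrix inverse on positive definite matrices,
\[
\mathcal R_{(\bar R,\bar Q,\bar G)}[\Pi]=(\bar Q-Q)+(\Pi B+C^\top\Pi D)\bigl[(R+D^\top\Pi D)^{-1}-(\bar R+D^\top\Pi D)^{-1}\bigr](\Pi B+C^\top\Pi D)^\top\ge 0,
\]
so the principle gives $(\bar R,\bar Q,\bar G)\in\svbset$ with $P(\bar R,\bar Q,\bar G)\ge P(R,Q,G)$. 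For (b), take $\Pi=P_1+P_2$ with $P_1=P(R,Q,G)$, $P_2=P(\bar R,\bar Q,\bar G)$, and write $L_i=P_iB+C^\top P_iD$, $S_1=R+D^\top P_1D>0$, $S_2=\bar R+D^\top P_2D>0$; since each $P_i$ solves its own equation,
\[
\mathcal R_{(R+\bar R,Q+\bar Q,G+\bar G)}[\Pi]=L_1S_1^{-1}L_1^\top+L_2S_2^{-1}L_2^\top-(L_1+L_2)(S_1+S_2)^{-1}(L_1+L_2)^\top\ge 0
\]
by the joint operator convexity of $(L,S)\mapsto LS^{-1}L^\top$ on $\{S>0\}$, while $(R+\bar R)+D^\top\Pi D=S_1+S_2>0$ and $\Pi(T)=G+\bar G$; the principle gives $(R+\bar R,Q+\bar Q,G+\bar G)\in\svbset$ with $P(R+\bar R,Q+\bar Q,G+\bar G)\ge P(R,Q,G)+P(\bar R,\bar Q,\bar G)$.

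The delicate step is the subsolution principle, above all the preservation of the definiteness constraint \eqref{1-002} along the Newton iteration. This is exactly why one must iterate from a \emph{sub}solution: every iterate then dominates $\Pi_0$, so $R+D^\top\Pi_kD$ stays uniformly positive definite and the iteration cannot break down; moreover the squeeze $\Pi_0\le\Pi_k\le\Pi_1$ supplies the uniform bound needed to pass to the limit in \eqref{eq:lyap}. The only other nonroutine ingredient is the joint operator convexity of $(L,S)\mapsto LS^{-1}L^\top$ used for (b), which is classical — for instance because the spectrahedron $\{(L,S,Y):S>0,\ Y\ge LS^{-1}L^\top\}$ is convex.
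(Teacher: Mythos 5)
Your proof is correct, and although it runs on the same engine as the paper's argument --- the completion-of-squares identity \eqref{4-001} together with the linear comparison Lemma~\ref{lem401} and a monotone Newton iteration --- it is organized genuinely differently. The paper iterates from $P_0=0$, proves a necessary-and-sufficient criterion for solvability in terms of uniform bounds on those iterates (Proposition~\ref{prp401}), and then establishes (b) and (c) by induction, showing the iterates for the new weights stay above $P+\bar P$ (resp.\ $P$) so that \eqref{1-002} never degenerates. You instead iterate from a subsolution and prove a self-contained ``subsolution principle'' (nonnegative Riccati residual, strict constraint, terminal value below $G$ imply membership in $\svbset$ together with the comparison $P\ge\Pi$), after which (c) reduces to the order-reversal of the matrix inverse and (b) to the static inequality $L_1S_1^{-1}L_1^\top+L_2S_2^{-1}L_2^\top\ge(L_1+L_2)(S_1+S_2)^{-1}(L_1+L_2)^\top$, i.e.\ joint convexity of $(L,S)\mapsto LS^{-1}L^\top$; the paper obtains the same nonnegativity implicitly by summing the two completed squares evaluated at the common feedback $\Ctr(\tilde P_{n-1})$, so the algebra is equivalent, but your version isolates it as a matrix fact checked once at the explicit candidate $P+\bar P$ and thereby removes the ad hoc induction hypotheses. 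Your treatment of the cone statements is also slightly more elementary: nonemptiness via $(I,0,0)$ with $P\equiv 0$ instead of Lemma~\ref{lem:201}, and exclusion of the origin by observing that \eqref{1-002} already fails at $t=T$ (there $R(T)+D(T)^\top G D(T)=0$), instead of the paper's LQ value-function argument. Only two routine points deserve a sentence in a final write-up: the subsolution $\Pi$ should be assumed absolutely continuous so that the residual inequality makes sense almost everywhere (automatic in both of your applications, where $\Pi$ is a solution or a sum of two solutions), and the passage to the limit of the monotone, uniformly bounded iterates via the integrated linear equations should be spelled out --- a step the paper also leaves essentially implicit.
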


The above result is intuitively natural from the viewpoint of control,
but technically nontrivial.
Indeed, the solvability of the Riccati equation yields the well-posedness of the associated LQ problem but not usually vice versa.

Let us first show that $(0,0,0)\notin\svbset$.
Indeed, if it is not true, then the associated LQ problem \eqref{1-004} is solvable for any initial time $s\in[0,1]$ and data $x_0$, with the minimal cost $J^* \equiv 0$.
This implies $P = 0$ that does not satisfy \eqref{1-002}.

The assertion (a) of Theorem~\ref{thm:201} follows directly from scaling.
The solvable set $\svbset$ is nonempty due to the following known result (cf.~\cite[Theorem 6.7.2]{young1999stochastic}).

\begin{lemma}\label{lem:201}
Let $R>0$, $G\ge 0$ and $Q-S R^{-1}S^\top \ge 0$ with $S \in L^{\infty}(0,T;\R^{d\times d})$.
Then the Riccati equation
\begin{equation*}
\begin{aligned}
& \dot{P} + A^\top P + P A + C^\top P C + Q \\
& \quad  = (PB + C^\top P D + S^\top) (R + D^\top P D)^{-1}(PB + C^\top P D + S)^{\top}
\end{aligned}
\end{equation*}
with $P(T) = G$ admits a unique solution $P\in C([0,T];\Sym^d)$ with $P\ge 0$.
\end{lemma}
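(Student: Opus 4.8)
This is a classical fact about ``standard'' LQ Riccati equations (cf.\ \cite[Theorem~6.7.2]{young1999stochastic}); the plan is to recall the argument. The idea is to treat \eqref{1-000} with the data of Lemma~\ref{lem:201} as an ordinary differential equation for $P$, to combine a local existence/uniqueness result with an a priori two-sided bound on $P$ that keeps $P$ bounded and keeps $R+D^{\top}PD$ uniformly positive definite, and then to extend the local solution to all of $[0,T]$. A preliminary reduction makes the structure transparent: the standard change of control variable $\tilde u=u+R^{-1}S^{\top}x$ removes the cross term, so that the equation of Lemma~\ref{lem:201} for $(A,B,C,D,R,Q,S,G)$ is, \emph{with the same unknown} $P$, equivalent to the cross-term-free equation \eqref{1-000} for the data $(A-BR^{-1}S^{\top},\,B,\,C-DR^{-1}S^{\top},\,D,\,R,\,Q-SR^{-1}S^{\top},\,G)$; since $Q-SR^{-1}S^{\top}\ge0$ by hypothesis, I may assume from now on that $S=0$, $Q\ge0$, $G\ge0$, $R>0$. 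The one elementary fact used repeatedly is that, $R\in C([0,T];\Sym^d)$ being pointwise positive definite on the compact interval $[0,T]$, there is $\delta_0>0$ with $R\ge\delta_0 I$; hence $R+D^{\top}PD\ge\delta_0 I$ for \emph{every} $P\ge0$, so on $\{P\ge0\}$ the constraint \eqref{1-002} holds automatically and the right-hand side $F(t,P)$ of \eqref{1-001} is well defined and locally Lipschitz in $P$.

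For local well-posedness I would invoke the Carath\'eodory theory: since $A,B,C\in L^{\infty}$ and $D,R\in C$, the map $F$ satisfies the Carath\'eodory conditions for local existence and uniqueness (measurable in $t$, locally Lipschitz in $P$, locally dominated in $P$ by an integrable function of $t$) on the open set $\mathcal{O}=\{(t,P):R(t)+D(t)^{\top}PD(t)>0\}$, which contains $[0,T]\times\{P\ge0\}$ and in particular $(T,G)$. Hence there is a unique maximal absolutely continuous solution $P$ on an interval $(\tau,T]\subseteq[0,T]$ with $P(T)=G$; on this interval $\Lambda:=R+D^{\top}PD>0$ by the definition of a solution, so the feedback $\Theta:=-\Lambda^{-1}(PB+C^{\top}PD)^{\top}$ is well defined, measurable, and bounded on every compact subinterval of $(\tau,T]$.

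Now comes the a priori bound. A direct substitution of $\Theta$ into \eqref{1-001} shows that on any $[s,T]\subseteq(\tau,T]$ the function $P$ satisfies the closed-loop linear matrix equation $\dot P+(A+B\Theta)^{\top}P+P(A+B\Theta)+(C+D\Theta)^{\top}P(C+D\Theta)+Q+\Theta^{\top}R\Theta=0$ with $P(T)=G$. Applying It\^o's formula to $t\mapsto\xi_t^{\top}P(t)\xi_t$ along $\md\xi=(A+B\Theta)\xi\,\md t+(C+D\Theta)\xi\,\md w_t$, $\xi_s=x$, and using this identity, one obtains $x^{\top}P(s)x=\E\bigl[\int_s^T\xi_t^{\top}(Q+\Theta^{\top}R\Theta)\xi_t\,\md t+\xi_T^{\top}G\xi_T\bigr]\ge0$ because $Q\ge0$, $R>0$ and $G\ge0$. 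For the upper bound, the same computation run along the uncontrolled flow $\md\eta=A\eta\,\md t+C\eta\,\md w_t$ --- using only that the deleted quadratic term in \eqref{1-001} is a positive semi-definite matrix --- gives $x^{\top}P(s)x\le x^{\top}\bar P(s)x$, where $\bar P$ is the solution, global on $[0,T]$ since its equation is linear, of the linear matrix equation obtained from \eqref{1-001} by deleting the quadratic term, with $\bar P(T)=G$. Hence $0\le P(t)\le\bar P(t)$ for all $t\in(\tau,T]$. (A purely deterministic comparison of the two matrix equations gives the same two inequalities.)

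The rest is routine continuation. With $\rho:=\sup_{[0,T]}\|\bar P\|$, the sandwich $0\le P\le\rho I$ confines $(t,P(t))$ to a compact subset of $\mathcal{O}$, on which $\delta_0 I\le R+D^{\top}PD\le(\|R\|_{\infty}+\rho\|D\|_{\infty}^{2})I$ and $\|F(t,P)\|$ is dominated by an $L^{\infty}$ function of $t$; hence $\dot P\in L^{\infty}(\tau,T)$, $P$ is uniformly Lipschitz, and $\lim_{t\downarrow\tau}P(t)$ exists and lies in $\mathcal{O}$, so $\tau=0$ and $P$ extends to a solution on all of $[0,T]$, with $R+D^{\top}PD\ge\delta_0 I>0$ and $P\ge0$. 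Uniqueness on $[0,T]$ is the uniqueness half of the local theory together with this bound; since $P^{\top}$ solves the same equation with the same symmetric terminal datum $G$, uniqueness forces $P=P^{\top}$; and $\dot P\in L^{\infty}$ yields $P\in C([0,T];\Sym^d)$. The step I expect to be the main obstacle is the a priori lower bound $P\ge0$ on the maximal interval: it is exactly what makes the constraint \eqref{1-002} self-sustaining and so lets the continuation close, yet it cannot be read off the equation directly --- it rests on the closed-loop representation, whose coefficients involve the unknown $P$, so one must argue on the \emph{already constructed} maximal interval (where $\Lambda>0$ is guaranteed) rather than attempt an a priori estimate. Everything else --- the Carath\'eodory theory, the upper comparison, and the continuation --- is standard once that representation and the bound $R\ge\delta_0 I$ are in hand.
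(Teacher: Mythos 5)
Your argument is correct, but it is not the paper's route: the paper gives no proof of Lemma~\ref{lem:201} at all, quoting it as a known result from \cite[Theorem~6.7.2]{young1999stochastic}, and the proof referenced there (like the machinery the paper itself rebuilds in Section~3 through Lemma~\ref{lem401} and Proposition~\ref{prp401}) is Bellman's quasi-linearization: iterate linear Lyapunov equations, show the iterates are nonincreasing and bounded below by $0$, and pass to the limit, cf.~\cite{wonham1968matrix}. You instead complete the square to absorb the cross term (your identity replacing $(A,C,Q)$ by $(A-BR^{-1}S^{\top},\,C-DR^{-1}S^{\top},\,Q-SR^{-1}S^{\top})$ with the same unknown $P$ is correct, and it silently repairs the $S$ versus $S^{\top}$ mismatch in the displayed equation of the lemma), then run local Carath\'eodory existence on the open set where $R+D^{\top}PD>0$, establish the a priori sandwich $0\le P\le\bar P$ via the closed-loop and uncontrolled stochastic representations (equivalently, purely deterministically via Lemma~\ref{lem401} applied to the closed-loop equation with source $Q+\Theta^{\top}R\Theta\ge0$ and to the linear comparison equation), and continue using $R\ge\delta_0 I$. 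What your route buys is a short, self-contained proof that also delivers the explicit bounds $0\le P\le\bar P$; what the quasi-linearization route buys is a constructive approximation scheme and the template that the paper then pushes into the indefinite setting in Proposition~\ref{prp401}. Two points worth tightening, neither a real gap: the transpose trick for symmetry only works if you fix the extension of the right-hand side to nonsymmetric arguments as $(PB+C^{\top}PD)(R+D^{\top}PD)^{-1}(B^{\top}P+D^{\top}PC)$ --- or, more simply, pose the ODE on $\Sym^d$, where the vector field is $\Sym^d$-valued and symmetry is automatic; and the feedback $\Theta$ is only bounded on compact subintervals of the maximal interval, which your proof handles correctly by proving $P\ge0$ on each $[s,T]\subset(\tau,T]$ before invoking the continuation criterion.
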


Let us postpone the rest of the proof of Theorem~\ref{thm:201} into next section, but turn to prove another main result, Theorem~\ref{thm1.1}, that has been stated in the previous section.
The existence part follows immediately from the following result that is more general but less practicable than Theorem~\ref{thm1.1}.

\begin{proposition}\label{prp:301}
Let $D=I$, $G>0$, 
$\alpha \in (0,1)$, and $R_\alpha(\cdot) > 0$ satisfying $R_{\alpha}(T) \le \alpha G$ and
\begin{align}\label{3-001}
\nonumber & \dot{R}_{\alpha} + A^\top R_{\alpha} + R_{\alpha} A + C^\top R_{\alpha} C + \alpha Q\\
& \quad - (1-\alpha)^{-1}(R_{\alpha}B + C^\top R_{\alpha})R_{\alpha}^{-1}(R_{\alpha}B + C^\top R_{\alpha})^\top \ge 0
\end{align}
over $[0,T]$.
Then $(R,Q,G)\in \svbset$ provided $R + R_{\alpha} \ge 0$.
\end{proposition}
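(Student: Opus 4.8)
The plan is to reduce the general statement to the monotonicity property of the solvable set (Theorem~\ref{thm:201}(c)): since $R + R_\alpha \ge 0 \ge -R_\alpha$ means $R \ge -R_\alpha$, it suffices to show that $(-R_\alpha, Q, G)\in\svbset$, i.e.\ that the Riccati equation \eqref{1-000} with cost weights $(-R_\alpha, Q, G)$ and system data $(A,B,C,I)$ admits a solution on $[0,T]$. Once that is established, monotonicity upgrades it to every $R$ with $R \ge -R_\alpha$, which is exactly the claim.

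To produce the solution for the benchmark triple $(-R_\alpha,Q,G)$, the natural guess is the substitution $P = \alpha^{-1}(R_\alpha + \Pi)$ for a new unknown $\Pi$, chosen so that $R + D^\top P D = -R_\alpha + \alpha^{-1}(R_\alpha + \Pi) = \alpha^{-1}\bigl((1-\alpha)R_\alpha + \Pi\bigr)$; with $D=I$ this is positive as soon as $\Pi \ge 0$, using $R_\alpha>0$ and $\alpha\in(0,1)$, so the constraint \eqref{1-002} will come for free from nonnegativity of $\Pi$. Substituting into \eqref{1-000} and using the ODE/inequality \eqref{3-001} satisfied by $R_\alpha$, I expect the terms to reorganize into a Riccati equation for $\Pi$ of exactly the form handled by Lemma~\ref{lem:201}: one gets an equation
\[
\dot\Pi + \widehat A^\top \Pi + \Pi \widehat A + C^\top \Pi C + \widehat Q = (\Pi B + C^\top \Pi + S^\top)(\widehat R + \Pi)^{-1}(\Pi B + C^\top\Pi + S)^\top
\]
with $\widehat R = (1-\alpha)R_\alpha>0$, terminal value $\Pi(T) = \alpha G - R_\alpha(T) \ge 0$ by hypothesis, and with $\widehat Q$, $S$, $\widehat A$ read off from the algebra; the inequality \eqref{3-001} is precisely what forces $\widehat Q - S\widehat R^{-1}S^\top \ge 0$. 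Then Lemma~\ref{lem:201} delivers a unique $\Pi \in C([0,T];\Sym^d)$ with $\Pi\ge 0$, hence a solution $P = \alpha^{-1}(R_\alpha+\Pi)$ of \eqref{1-000} satisfying \eqref{1-002}, so $(-R_\alpha,Q,G)\in\svbset$.

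The main obstacle is the bookkeeping in the substitution: one must verify that after plugging $P=\alpha^{-1}(R_\alpha+\Pi)$ into the quadratic right-hand side of \eqref{1-001} and cancelling the $R_\alpha$-only terms via \eqref{3-001}, the residual genuinely has the Lemma~\ref{lem:201} structure, and in particular that the cross term produces an $S$ for which $\widehat Q - S\widehat R^{-1}S^\top$ is nonnegative rather than merely that \eqref{3-001} holds — these should coincide, but the matrix identity tying $(1-\alpha)^{-1}(R_\alpha B + C^\top R_\alpha)R_\alpha^{-1}(\cdot)^\top$ to the completed-square form in the $\Pi$-equation needs to be checked carefully, keeping track of the factors of $\alpha$ and $1-\alpha$. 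I would also note that uniqueness of the solution to \eqref{1-000} for the final triple $(R,Q,G)$ is not asserted here (Proposition~\ref{prp:301} only claims membership in $\svbset$), so no extra argument on that point is required. Finally, the passage from $D=I$ to general invertible $D$ needed for Theorem~\ref{thm1.1} is handled separately (by an invertible change of control variable absorbing $D$), and is not part of this proposition.
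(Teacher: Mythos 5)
Your proposal is correct and takes essentially the same route as the paper: the paper substitutes $K=P-\alpha^{-1}R_\alpha$ (your $\Pi=\alpha K$), applies Lemma~\ref{lem:201} to the resulting equation with $\tilde R_\alpha=\alpha^{-1}(1-\alpha)R_\alpha$, terminal value $G-\alpha^{-1}R_\alpha(T)\ge 0$, and cross term $\tilde S=\alpha^{-1}(B^\top R_\alpha+R_\alpha C)$, where \eqref{3-001} gives exactly $\tilde Q-\tilde S\tilde R_\alpha^{-1}\tilde S^\top\ge 0$, and then concludes $(-R_\alpha,Q,G)\in\svbset$ and invokes Theorem~\ref{thm:201}(c). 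The bookkeeping you flagged does close precisely as you anticipate, up to your harmless rescaling by $\alpha$.
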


\begin{proof}
Fix an $\alpha \in (0,1)$.
Consider the Riccati equation
\begin{equation}\label{3-002}
\left\{\begin{aligned}
& \dot{K} + A^\top K + K A + C^\top K C + \tilde{Q} \\
& \quad = (K B +  C^\top K + \tilde{S}^\top) ( \tilde{R}_{\alpha} + K)^{-1}(K B +  C^\top K + \tilde{S})^{\top},\\
& K_T = G - \alpha^{-1} R_{\alpha} \ge 0
\end{aligned}\right.
\end{equation}
where
\begin{align*}
\tilde{Q} & = {\alpha}^{-1}(\dot{R}_{\alpha} + A^\top R_{\alpha} + R_{\alpha} A + C^\top R_{\alpha} C) + Q,\\
\tilde{S} & = {\alpha}^{-1}(B^\top R_{\alpha} +  R_{\alpha} C),
\quad
\tilde{R}_\alpha = {\alpha}^{-1}(1-\alpha) R_{\alpha}.
\end{align*}
The condition~\eqref{3-001} implies that
\[
\tilde{Q} - \tilde{S}  \tilde{R}_{\alpha}^{-1} \tilde{S}^\top \ge 0.
\]
In view of Lemma~\ref{lem:201}, \eqref{3-002} admits a unique solution $K(\cdot) \ge 0$ over $[0,T]$.
Set $P = K + \alpha^{-1}R_\alpha$.
From~\eqref{3-002} it is easy to verify that $P(T) = G$, and $P-R_\alpha = K + \tilde{R}_\alpha > 0$, and
\begin{align*}
& \dot{P} + A^\top P + P A + C^\top P C + Q \\
& = \dot{K} + A^\top K + K A + C^\top K C + {\alpha}^{-1}(\dot{R}_{\alpha} + A^\top R_{\alpha} + R_{\alpha} A + C^\top R_{\alpha} C) + Q\\
& = (K B +  C^\top K + \tilde{S}^\top) ( \tilde{R}_{\alpha} + K)^{-1}(K B +  C^\top K + \tilde{S})^{\top}\\
& = (PB + C^\top P) (P - R_\alpha)^{-1}(PB + C^\top P)^{\top},
\end{align*}
Thus $(-R_\alpha,Q,G)\in\svbset$, which along with Theorem~\ref{thm:201} (c) concludes the proof.
\end{proof}

\begin{remark}\label{rmk201}
In the above proof we have seen that $P = K + \alpha^{-1}R_\alpha
\ge \alpha^{-1}R_\alpha$. 
This by means of Theorem~\ref{thm:201}(c) provides a low bound of the solution to \eqref{1-000}.
\end{remark}

\begin{proof}[Proof of Theorem~\ref{thm1.1}]
Without loss of generality, we assume $D=I$ in addition.
Then the existence follows immediately from Proposition~\ref{prp:301} by taking $R_\alpha(t) = \vf(t)I$ and verifying \eqref{3-001} from \eqref{1-003}.
The uniqueness is a well-know result, for instance, see~\cite[Proposition~6.7.2]{young1999stochastic}. 
The proof is complete.
\end{proof}

\begin{remark}
Let us point out that the result of Theorem \ref{thm:201} can be extended trivially to a more general case that the Riccati equation reads
\[\left\{
\begin{aligned}
& \dot{P} + A^\top P + P A + \textstyle{\sum}_{i=1}^{m} C_i^\top P C_i + Q \\
& \quad  = \big(PB + \textstyle{\sum}_{i=1}^{m} C_i^\top P D_i\big) \big(R + \textstyle{\sum}_{i=1}^{m} D_i^\top P D_i\big)^{-1}\big(PB + \textstyle{\sum}_{i=1}^{m} C_i^\top P D_i\big)^{\top},\\
& R + \textstyle{\sum}_{i=1}^{m} D_i^\top P D_i > 0, \quad \text{over the time interval }[0,T],\text{ and}\\
& P(T) = G,
\end{aligned}\right.
\]
and with different state and control dimensions, i.e., $B$ and $D$ take values in $\R^{d\times k}$, and $R$ in $\Sym^k$,
but in this note Theorem \ref{1-000} could be derived only in the case that $k=d$ and $m=1$.
For a balanced consideration, we shall restrict ourselves to \eqref{1-000} rather than a general form as above.
\end{remark}

\section{Quasi-linearization}

In this section we shall complete the proof of Theorem~\ref{thm:201} by virtue of Bellman's quasi-linearization method that has been used to resolve the standard Riccati equation, i.e., $R>0$ and $Q,G\ge 0$ (see~\cite{wonham1968matrix, young1999stochastic} for instance).
First of all we state a preliminary lemma taken from~\cite[Lemma~6.7.3]{young1999stochastic}.

\begin{lemma}\label{lem401}
The following linear matrix ODE over time interval $[0,T]$:
\[
\dot{P} + A^\top P + P A + C^\top P C + Q = 0,
\quad P(T) = G
\]
admits a unique solution $P\in C([0,T];\Sym^d)$. 
If $Q\ge 0$ and $G \ge 0$, then $P \ge 0$.
\end{lemma}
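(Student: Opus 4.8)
The existence and uniqueness parts are standard linear ODE theory. Identifying $\R^{d\times d}$ with $\R^{d^2}$ by vectorization, the equation becomes $\dot p(t)=\mathcal{M}(t)p(t)+q(t)$ with $p(T)$ prescribed, where $\mathcal{M}\in L^{\infty}(0,T;\R^{d^2\times d^2})$ encodes the linear map $M\mapsto A^\top M+MA+C^\top MC$ and $q=\mathrm{vec}(Q)\in L^{\infty}$. By the standard theory for linear systems with integrable coefficients (Carath\'eodory, or Picard iteration together with Gr\"onwall's inequality) there is a unique absolutely continuous solution on all of $[0,T]$; since $\mathcal{M},q\in L^{\infty}$ one has $\dot P\in L^{\infty}$, hence $P\in C([0,T];\R^{d\times d})$. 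Transposing the whole equation and using $Q=Q^\top$, $G=G^\top$ shows that $P^\top$ solves the same terminal-value problem, so uniqueness forces $P=P^\top$, i.e.\ $P\in C([0,T];\Sym^d)$.

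For the assertion $P\ge 0$ when $Q\ge 0$ and $G\ge 0$, the plan is to use a stochastic representation of the solution. Fix $t\in[0,T]$ and $\xi\in\R^d$, and let $(x_r)_{r\in[t,T]}$ be the unique strong solution of the linear SDE $\md x_r=A(r)x_r\,\md r+C(r)x_r\,\md w_r$ with $x_t=\xi$; since $A,C\in L^{\infty}$, $x$ has finite second moments on $[t,T]$. Applying It\^o's formula to $r\mapsto x_r^\top P(r)x_r$ and using the ODE, the drift telescopes to $-x_r^\top Q(r)x_r$; integrating from $t$ to $T$ and taking expectations (so that the martingale part drops out) yields
\[
\xi^\top P(t)\,\xi=\E\Bigl[x_T^\top G\,x_T+\int_t^T x_r^\top Q(r)\,x_r\,\md r\Bigr].
\]
The right-hand side is nonnegative whenever $G\ge 0$ and $Q(r)\ge 0$ for a.e.\ $r$, and since $t$ and $\xi$ were arbitrary this gives $P(t)\ge 0$ throughout $[0,T]$.

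Alternatively one can argue purely deterministically: the non-autonomous linear flow generated by $M\mapsto A^\top M+MA+C^\top MC$ leaves the cone $\{M\in\Sym^d:M\ge 0\}$ invariant, because both $M\mapsto \me^{sA^\top}M\me^{sA}$ and $M\mapsto C^\top MC$ preserve that cone and one passes to the merely measurable time-dependence by piecewise-constant approximation; Duhamel's formula then expresses $P(t)$ as the image of $G$ under the backward propagator plus the integral of the images of the $Q(r)$, each term being $\ge 0$.

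The existence–uniqueness step is entirely routine. The one point needing care is the positivity claim. On the stochastic route this amounts to justifying It\^o's formula for $x_r^\top P(r)x_r$, which is legitimate since $P$ is absolutely continuous in $r$ and affine (in particular $C^2$) in the space variable; on the deterministic route the genuine technical ingredient is the invariance of the positive-semidefinite cone under the non-autonomous flow with $L^{\infty}$ coefficients. I would present the stochastic representation as the main argument, as it is the shortest and fits the control-theoretic setting of the paper.
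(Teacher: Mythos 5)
The paper does not prove this lemma at all: it is quoted verbatim from Yong--Zhou \cite[Lemma~6.7.3]{young1999stochastic}, so there is no in-paper argument to compare against. Your proof is correct and self-contained, and the stochastic representation you use is exactly the natural one in this setting: vectorization plus Carath\'eodory theory for existence and uniqueness, symmetry by uniqueness applied to the transposed equation, and then the Feynman--Kac-type identity
\[
\xi^\top P(t)\,\xi=\E\Bigl[x_T^\top G\,x_T+\int_t^T x_r^\top Q(r)\,x_r\,\md r\Bigr]
\]
for the positivity. The only step stated a little too quickly is the vanishing of the expectation of the stochastic integral $\int_t^T x_r^\top\bigl(C^\top P+PC\bigr)(r)\,x_r\,\md w_r$: its integrand is quadratic in $x$, so ``finite second moments'' alone does not make it a true martingale. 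Either invoke that linear SDEs with bounded coefficients have finite moments of every order (so the fourth-moment criterion applies), or localize at $\tau_n=\inf\{r:|x_r|\ge n\}$ and pass to the limit by dominated convergence using $\E\sup_{r\le T}|x_r|^2<\infty$. With that one sentence added the argument is complete. Your deterministic alternative (invariance of the positive-semidefinite cone under the propagator of $M\mapsto A^\top M+MA+C^\top MC$, e.g.\ via Trotter splitting of the Lyapunov part and the positive map $M\mapsto C^\top MC$) is also sound and has the advantage of avoiding probability altogether, at the cost of the piecewise-constant approximation of the $L^\infty$ time-dependence; given the control-theoretic context, your choice to foreground the stochastic route is reasonable.
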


The first step of the quasi-linearization method is typically to rewrite \eqref{1-001} into the following form:
\begin{equation}\label{4-003}
\dot{P} + \Phi(P,\Ctr(P)) + Q + \Ctr(P)^\top R \Ctr(P) = 0,
\end{equation}
with $P(T) = G$, where
\begin{align*}
\Phi(P,U) & := (A+BU)^\top P + P (A+BU) + (C+DU)^\top P (C+DU)\\
\Ctr(P) & := - (R + D^\top P D)^{-1}(B^{\top} P + D^\top P C)
\end{align*}
A direct calculation shows that
\begin{equation}\label{4-001}
\begin{aligned}
\Phi(P,U) + U^\top R U & - \Phi(P,\Ctr(P)) - \Ctr(P)^\top R \Ctr(P)\\
& = (U-\Ctr(P))^{\top} (R + D^\top P D) (U-\Ctr(P)).
\end{aligned}
\end{equation}
This equality plays an important role in our arguments.

Although it seems hopeless to apply Bellman's quasi-linearization method to resolve indefinite Riccati equations completely,
the following result tells us that such a approach can still be used to obtain a necessary-sufficient condition of solvability of \eqref{1-000}.

\begin{proposition}\label{prp401}
Define a sequence $(P_n(\cdot))_{n\ge 0}$ recursively as
\begin{equation}\label{4-002}
\begin{aligned}
& P_0 = 0; \quad P_n(T) = G,\\
& \dot{P}_{n} + \Phi(P_{n},\Ctr(P_{n-1})) + Q + \Ctr(P_{n-1})^\top R \Ctr(P_{n-1}) = 0,
\quad n\ge 1.
\end{aligned}
\end{equation}
Then \eqref{1-000} admits a solution over $[0,T]$ if and only if
\eqref{4-002} admits a solution for each $n\ge 1$, and moreover, there are constants $c,\delta >0$ such that
\begin{equation}\label{4-006}
P_n \ge -c I
\quad\text{and}\quad
R + D^\top P_n D \ge \delta I \quad\text{over }[0,T] \text{ for each }
n\ge 1.
\end{equation}
\end{proposition}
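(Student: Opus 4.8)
The plan is to prove the two implications separately, the common tool being the algebraic identity \eqref{4-001} together with the comparison contained in Lemma~\ref{lem401}. I will use repeatedly that for any bounded measurable feedback $U$ the map $X\mapsto\Phi(X,U)$ has exactly the form occurring in Lemma~\ref{lem401}, with $A$ and $C$ replaced by $A+BU$ and $C+DU$; consequently any linear equation $\dot X+\Phi(X,U)+\Pi=0$, $X(T)=\Xi$, with $\Pi\ge0$ and $\Xi\ge0$, has a solution $X\ge0$, and, by induction, each $P_n$ is well defined and lies in $C([0,T];\Sym^d)$ as soon as $R+D^\top P_{n-1} D$ is invertible (for $n=1$ one has $\Ctr(P_0)=\Ctr(0)=0$, so $P_1$ is just the solution of $\dot P_1+A^\top P_1+P_1A+C^\top P_1C+Q=0$, $P_1(T)=G$).

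For necessity, let $P$ solve \eqref{1-000}. By continuity of $R+D^\top P D$ on the compact interval $[0,T]$ there are $c,\delta>0$ with $P\ge-cI$ and $R+D^\top P D\ge\delta I$. I would prove, by induction on $n\ge1$, that $P_n$ is well defined and $P_n\ge P$ on $[0,T]$: given $P_{n-1}\ge P$, one has $R+D^\top P_{n-1} D\ge R+D^\top P D>0$, so $\Ctr(P_{n-1})$ and hence $P_n$ exist; writing \eqref{1-000} in the form \eqref{4-003}, subtracting the equation defining $P_n$, and applying \eqref{4-001} with matrix $P$ and feedback $U=\Ctr(P_{n-1})$ shows that $\Delta_n:=P-P_n$ solves
\[
\dot\Delta_n+\Phi\bigl(\Delta_n,\Ctr(P_{n-1})\bigr)=\Theta_n,\qquad \Delta_n(T)=0,
\]
where $\Theta_n:=\bigl(\Ctr(P_{n-1})-\Ctr(P)\bigr)^\top\bigl(R+D^\top P D\bigr)\bigl(\Ctr(P_{n-1})-\Ctr(P)\bigr)\ge0$; Lemma~\ref{lem401} applied to $-\Delta_n$ then gives $\Delta_n\le0$. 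Hence $P_n\ge P\ge-cI$ and $R+D^\top P_n D\ge R+D^\top P D\ge\delta I$, which is \eqref{4-006}.

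For sufficiency, suppose every $P_n$ exists and \eqref{4-006} holds. First I would establish the monotonicity $P_1\ge P_2\ge\cdots$: subtracting the equations for $P_{n+1}$ and $P_n$ and applying \eqref{4-001} with matrix $P_n$ and feedback $U=\Ctr(P_{n-1})$ shows that $P_n-P_{n+1}$ solves $\dot X+\Phi(X,\Ctr(P_n))+N_n=0$, $X(T)=0$, with $N_n:=\bigl(\Ctr(P_{n-1})-\Ctr(P_n)\bigr)^\top\bigl(R+D^\top P_n D\bigr)\bigl(\Ctr(P_{n-1})-\Ctr(P_n)\bigr)\ge0$ by \eqref{4-006}, so $P_n-P_{n+1}\ge0$ by Lemma~\ref{lem401}. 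Thus $(P_n)_{n\ge1}$ is nonincreasing and, by \eqref{4-006}, bounded below by $-cI$, hence converges pointwise on $[0,T]$ to some $P$ with values in $\Sym^d$. The two-sided bound $-cI\le P_n\le P_1$ together with $R+D^\top P_n D\ge\delta I$ makes $\|\Ctr(P_{n-1})\|$ bounded uniformly in $n$, whence, from \eqref{4-002}, $\dot P_n$ is bounded in $L^\infty(0,T)$ uniformly in $n$; so the $P_n$ are uniformly Lipschitz and the monotone pointwise convergence improves to uniform convergence, $P$ being Lipschitz. Since $R+D^\top P D\ge\delta I>0$, the map $\Ctr$ is continuous at $P(t)$ for every $t$, so $\Ctr(P_{n-1})\to\Ctr(P)$ uniformly on $[0,T]$; passing to the limit in the integrated form of \eqref{4-002} by dominated convergence yields
\[
P(t)=G+\int_t^T\Bigl[\Phi\bigl(P,\Ctr(P)\bigr)+Q+\Ctr(P)^\top R\,\Ctr(P)\Bigr](s)\,\md s,
\]
i.e.\ $P$ solves \eqref{4-003}, equivalently \eqref{1-001}; and $R+D^\top P D\ge\delta I$ is \eqref{1-002}. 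Hence \eqref{1-000} is solvable.

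The step I expect to be most delicate is the passage to the limit in the sufficiency direction: one must genuinely use the uniform lower bounds in \eqref{4-006} — $R+D^\top P_n D\ge\delta I$ to keep $\Ctr(P_{n-1})$ and therefore $\dot P_n$ bounded uniformly in $n$ (which is what turns pointwise monotone convergence into uniform convergence and allows the nonlinear feedback $\Ctr$ to pass through the limit), and $P_n\ge-cI$ to prevent the nonincreasing sequence from diverging to $-\infty$. Necessity, in contrast, is essentially a one-line comparison once \eqref{4-001} is available.
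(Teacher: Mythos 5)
Your proof is correct and follows essentially the same route as the paper: necessity by the comparison $P_n\ge P$ via the identity \eqref{4-001} and Lemma~\ref{lem401}, and sufficiency by showing $(P_n)$ is nonincreasing and bounded below, then passing to the limit. The only difference is that you spell out the limit passage (uniform bound on $\Ctr(P_{n-1})$, equi-Lipschitz bounds, uniform convergence, continuity of $\Ctr$) which the paper dispatches with ``Clearly, $P$ is the solution''; this is a welcome elaboration, not a deviation.
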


\begin{proof}
\emph{Necessity}.
Let us prove this direction by induction.
Suppose the assertion holds true for $n-1$. 
Then the existence and uniqueness of $P_n$ follow from Lemma~\ref{lem401}.
In view of \eqref{4-003} and \eqref{4-001}, we have
\begin{align*}
\dot{P} = & - \Phi(P,\Ctr(P)) - \Ctr(P)^\top R \Ctr(P) - Q\\
= & - \Phi(P,\Ctr(P_{n-1})) - \Ctr(P_{n-1})^\top R \Ctr({P}_{n-1}) - Q + \Theta_n.
\end{align*}
where $\Theta_n := (\Ctr(P_{n-1})-\Ctr(P))^{\top} (R + D^\top P D) (\Ctr(P_{n-1})-\Ctr(P)) \ge 0$.
Combining the equation of $P_n$, we have that
\[
\md (P_n - P)/\md t
+ \Phi(P_n - P, \Ctr(P_{n-1})) + \Theta_n = 0,
\quad (P_n - P)(T)=0.
\]
Thanks to Lemma~\ref{lem401}, we know that $P_n \ge P$, and then
from \eqref{1-002}, $R + D^\top P_n D \ge R + D^\top P D >0$, thus
\eqref{4-006} holds for $n$.

Since $P_1$ does exist from Lemma~\ref{lem401},
one can analogously prove the assertion for $n=1$. Therefore, by induction we conclude the necessity.

\emph{Sufficiency}.
An analogous calculation yields
\[
\md (P_n - P_{n+1})/\md t
+ \Phi(P_n - P_{n+1}, \Ctr(P_{n})) + \Delta_n = 0,
\quad (P_n - P_{n+1})(T)=0,
\]
where 
$$\Delta_n := (\Ctr(P_{n-1})-\Ctr(P_n))^{\top} (R + D^\top P_n D) (\Ctr(P_{n-1})-\Ctr(P_n)) \ge 0.$$
From Lemma~\ref{lem401} and the fact that $R + D^\top P_n D \ge \delta I$ for all $n$, we know $P_n - P_{n+1} \ge 0$ for any $n\ge 1$.
Thus $(P_n)_{n\ge 1}$ is a decreasing sequence in $C([0,T];\Sym^d)$, which along with the fact that $P_n \ge -cI$ for all $n$ yields that $(P_n)_{n\ge 1}$ has a limit, denoted by $P$. 
Clearly, $P$ is the solution of \eqref{1-000}.
The proof is complete.
\end{proof}

\begin{remark}
Equations \eqref{4-002} actually constitute a numerical algorithm to compute the solution of \eqref{1-000}.
By an analogous argument as that in~\cite[Proposition~4.1]{chen2000stochastic}, one can derive an estimate for the convergence speed of this algorithm as follows:
\[
|P_n(t) - P(t)| \le
K \sum_{k=n-2}^{\infty}
\frac{M^k}{k!}(T-t)^{k},\quad 
n \ge 3,
\]
where $K,M > 0$ are constants independent of $n$ and $t$.
\end{remark}

We are now in a position to complete the proof of Theorem~\ref{thm:201}.

\begin{proof}[Proof of Theorem~\ref{thm:201}]
It is clear that the convexity of $\svbset$ follows from properties (a) and (b).
Now let us prove assertion (b) by induction. Write
\begin{equation}\label{4-005}
(\tilde{R},\tilde{Q},\tilde{G}) = (R + \bar{R},Q+\bar{Q},G+\bar{G}).
\end{equation}
In order to apply Proposition~\ref{prp401}, we define (formally) sequence
$(\tilde{P}_n(\cdot))_{n\ge 0}$ recursively as
\begin{equation}\label{4-010}
\begin{aligned}
& \tilde{P}_0 = 0; \quad \tilde{P}_n(T) = \tilde{G},\\
& \dot{\tilde{P}}_{n} + \Phi(\tilde{P}_{n},\Ctr(\tilde{P}_{n-1})) + \tilde{Q} + \Ctr(\tilde{P}_{n-1})^\top \tilde{R} \Ctr(\tilde{P}_{n-1}) = 0,
\quad n\ge 1.
\end{aligned}
\end{equation}
Suppose \eqref{4-010} are well-defined up to $n-1$, and 
\begin{align*}
\tilde{P}_{n-1} \ge P + \bar{P}
\quad\text{and}\quad
\tilde{R} +  D^\top \tilde{P}_{n-1} D
\ge R + D^\top P D + \bar{R} + D^\top \bar{P} D > 0.
\end{align*}
Then $\tilde{P}_n$ is also well-defined from Lemma~\ref{lem401}. 
It follows from \eqref{4-003} and \eqref{4-001} that
\begin{align}\label{4-011}
\nonumber \dot{P} = & - \Phi(P,\Ctr(\tilde{P}_{n-1})) - \Ctr(\tilde{P}_{n-1})^\top R \Ctr(\tilde{P}_{n-1}) - Q\\
& + (\Ctr(\tilde{P}_{n-1})-\Ctr(P))^{\top} (R + D^\top P D) (\Ctr(\tilde{P}_{n-1})-\Ctr(P)).
\end{align}
Do the same transformation with respect to $\bar{P}$. 
Then, with \eqref{4-005} in mind we obtain
\begin{equation}\label{4-021}
\bigg\{\begin{aligned}
& \md (\tilde{P}_{n} - P - \bar{P})/{\md t}
+ \Phi(\tilde{P}_{n} - P - \bar{P}, \Ctr(\tilde{P}_{n-1}))
+ \Theta_n = 0, \\
& (\tilde{P}_{n} - P - \bar{P})(T) = 0,
\end{aligned}
\end{equation}
where
\begin{align*}
\Theta_n \, := \ & (\Ctr(\tilde{P}_{n-1})-\Ctr(P))^{\top} (R + D^\top P D) (\Ctr(\tilde{P}_{n-1})-\Ctr(P))\\
& + (\Ctr(\tilde{P}_{n-1})-\Ctr(\bar{P}))^{\top} (\bar{R} + D^\top \bar{P} D) (\Ctr(\tilde{P}_{n-1})-\Ctr(\bar{P})) \ge 0.
\end{align*}
By means of Lemma~\ref{lem401}, one has that
\begin{align}\label{4-020}
\tilde{P}_{n} \ge P + \bar{P}
\quad\text{and}\quad
\tilde{R} +  D^\top \tilde{P}_{n} D
\ge R + D^\top P D + \bar{R} + D^\top \bar{P} D > 0.
\end{align}
Since $\tilde{P}_1$ is well-defined due to Lemma~\ref{lem401}, and can be proved by a similar argument to satisfy \eqref{4-020} with $n=1$.
Thus, by induction, \eqref{4-020} holds for each $n\ge 1$.
This along with Proposition~\ref{prp401} concludes assertion (b) of Theorem~\ref{thm:201}.

Analogously, we can prove the assertion (c).
Indeed, one just needs to repeat the above argument with $(\bar{P},\bar{R},\bar{Q},\bar{G})$ instead of $(\tilde{P},\tilde{R},\tilde{Q},\tilde{G})$ in \eqref{4-010} and \eqref{4-011}, with \eqref{4-021} instead of the following
\[
\bigg\{\begin{aligned}
& \md (\bar{P}_n - P)/{\md t}
+ \Phi(\bar{P}_n - P, \Ctr(\bar{P}_{n-1}))
+ \varTheta_n = 0, \\
& (\bar{P}_n - P)(T) = \bar{G} - G \ge 0
\end{aligned}
\]
where (recalling that $Q \le \bar{Q}$ and $R\le \bar{R}$)
\[
\varTheta_n := \bar{Q} - Q + (\Ctr(\bar{P}_{n-1})-\Ctr(P))^{\top} (\bar{R} - R + D^\top P D) (\Ctr(\bar{P}_{n-1})-\Ctr(P)) 
\ge 0.
\]
Thus, also by induction, one can show that for each $n \ge 1$,
\begin{align*}
\bar{P}_{n} \ge P
\quad\text{and}\quad
\bar{R} +  D^\top \bar{P}_{n} D
\ge R + D^\top P D > 0.
\end{align*}
This by Proposition~\ref{prp401} concludes the assertion (c). 
The proof is complete.
\end{proof}

\section{Examples}

This section is devoted to several examples that illustrate the main results of this paper, especially Theorem~\ref{thm1.1}.
All of them will appear in the form of control problems
due to a well-known connection.

\begin{example}\label{ex5-01}
Consider the two-dimensional control problem: minimizing
\[
J = \E \int_0^1 \left( r_1(t)|u_1(t)|^2 + r_2(t)|u_2(t)|^2 \right)\md t + \E \left( |x_1(1)|^2 + |x_2(1)|^2\right),
\]
subject to
\[
\bigg\{
\begin{aligned}
& \md x_1(t) = \left(a_1 x_1(t) + u_2(t)\right)\md t + \left(x_2(t) + u_1(t)\right)\md w_t, \quad x_1(0) = y_1; \\
& \md x_2(t) = \left(a_2 x_2(t) - u_1(t)\right)\md t + \left(x_1(t) + u_2(t)\right)\md w_t, \quad x_2(0) = y_2,
\end{aligned}
\]
where $a_1$ and $a_2$ are given constants. In this case, the system data are
\begin{gather*}
A = \begin{bmatrix}
a_1 &  \\
 & a_2
\end{bmatrix}
,\quad
B = \begin{bmatrix}
 & 1 \\
-1 & 
\end{bmatrix}
,\quad
C = \begin{bmatrix}
 & 1 \\
1 & 
\end{bmatrix}
,\quad
D=I.
\end{gather*}

Let $\alpha \in (0,1)$. A straightforward computation shows that
\[
{A}^\top + {A} + C^\top C
- \frac{1}{1-\alpha}(B+C^\top) (B^\top+C)
=
\begin{bmatrix}
2a_1 + 1 - \frac{1}{1-\alpha} & \\
& 2a_2 + 1
\end{bmatrix}.
\]
In view of Theorem~\ref{thm1.1}, taking
\[\lambda_\alpha = 1 + \min\Big\{2a_1 - \frac{1}{1-\alpha}, 2a_2\Big\}
\quad\text{and}\quad
{\vf}_{\alpha}(t) = \alpha \me^{\lambda_\alpha (1-t)},
\]
we obtain that this LQ problem and the associated Riccati equation are both solvable as long as, for some $\alpha \in (0,1)$,
\[
\min\{r_1(t), r_2(t)\} \ge  - \alpha \me^{\lambda_\alpha (1-t)},\quad \forall\,t\in[0,1].
\]
This tells us that both $r_1(t)$ and $r_2(t)$ can be negative.
Furthermore, in view of Remark~\ref{rmk201}, we can give a lower bound of the solution of the associated Riccati equation that 
\[ P(t) \ge \alpha^{-1}{\vf}_{\alpha}(t) I = \me^{\lambda_\alpha (1-t)} I,
\quad
\forall\,t\in[0,1]. 
\]
\end{example}

\begin{example}\label{ex5-02}
Consider the one-dimensional control problem: minimizing
\[
J = \E \int_0^1 \left[ r(t)|u(t)|^2 + q(t)|x(t)|^2 \right]\md t + \E \left[ |x(1)|^2 \right],
\]
subject to
\[
\md x(t) = \left(a x(t) + b u(t)\right)\md t + \left( c x(t) + u(t)\right)\md w_t, \quad x(0) = x_0,
\]
where $a,b$, and $c$ are given constants.

To apply Theorem~\ref{thm1.1}, for $\alpha \in (0,1)$,
take
$\lambda_\alpha = 2a + c^2 - \frac{1}{1-\alpha}(b+c)^2$, and
\begin{equation*}
{\vf}_{\alpha}(t) = \alpha \me^{\lambda_\alpha (1-t)}
\left(1 + \int_t^1 \me^{\lambda_\alpha (s-1)} q(s) \,\md s\right).
\end{equation*}
Then, as long as ${\vf}_{\alpha} > 0$ the optimal control problem is solvable with control weight $r(t)\ge - {\vf}_{\alpha}(t)$.
To ensure ${\vf}_{\alpha} > 0$, we can select a state weight $q(\cdot)$ such that, for instance,
\begin{equation*}
q(t) > - 1 \ \ \text{if } \lambda_\alpha = 0;
\ \
q(t) > - \lambda_\alpha \ \ \text{if } \lambda_\alpha > 0;
\ \ \text{and}\ \
q(t) > \frac{\lambda_\alpha}{\me^{- \lambda_\alpha} - 1}
\ \ \text{if } \lambda_\alpha < 0.
\end{equation*}
This gives concrete examples of solvable LQ problems and Riccati equations with indefinite both state and control weights.
Analogous arguments can be moved parallel to the multi-dimensional case. 
\end{example}

\begin{example}\label{ex5-04}
Let us keep on the LQ problem in the previous example, and additionally assume that
\[a = c = q(t) = 0,
\quad \text{and}
\quad b=1,\]
and $r(t) = r$ is a constant.
In this case, we have
\begin{gather*}
\lambda_\alpha = - (1-\alpha)^{-1},
\quad \text{and}
\quad
{\vf}_{\alpha}(t) = \alpha \me^{\frac{t-1}{1-\alpha}}>0.
\end{gather*}
Thus, every $r$ that 
\[r \ge r_0 = - \sup_{\alpha\in(0,1)} \alpha\me^{-\frac{1}{1-\alpha}}
= \alpha\me^{-\frac{1}{1-\alpha}} \big\vert_{\alpha = \frac{3-\sqrt{5}}{2}}
\approx - 0.076\]
ensures the solvability of the problem.
However, such a low bound is larger than that determined in~\cite[Example 3.2]{chen1998stochastic} (there $r_0 \approx -0.1586$) with respect to the same problem.
This indicates in a sense that the characterization given by Theorem~\ref{thm1.1} is still rough although it provides various examples of solvable Riccati equations.
\end{example}

\bibliographystyle{amsalpha}
\bibliography{RicEq}

\end{document}